\DeclarePairedDelimiter{\fspc}{(\!(}{)\!)}
\DeclarePairedDelimiter{\wspc}{\langle\!\langle}{\rangle\!\rangle}
\newcommand{\Jb}{\mathbb{J}}
\newcommand{\Mf}{\mathfrak{M}}
\newcommand{\mf}{\mathfrak{m}}
\newcommand{\Nf}{\mathfrak{N}}
\newcommand{\Of}{\mathfrak{O}}
\newcommand{\Nb}{\mathbb{N}}
\newcommand{\Rb}{\mathbb{R}}
\newcommand{\Tb}{\mathbb{T}}
\newcommand{\Ub}{\mathbb{U}}
\newcommand{\ER}{\mathrm{ER}}
\newcommand{\LE}{\mathrm{LE}}
\newcommand{\No}{\mathbf{No}}
\newcommand{\OM}{\Rb\wspc{T}}
\newcommand{\On}{\mathbf{On}}
\newtheorem{mainthm}{Theorem}
\newtheorem{maincor}[mainthm]{Corollary}
\crefname{mainthm}{Theorem}{Theorems}
\crefname{maincor}{Corollary}{Corollaries}
\newtheorem{thm}{Theorem}[section]
\newtheorem{prop}[thm]{Proposition}
\newtheorem{lem}[thm]{Lemma}
\newtheorem{cor}[thm]{Corollary}
\crefname{thm}{Theorem}{Theorems}
\crefname{prop}{Proposition}{Propositions}
\crefname{lem}{Lemma}{Lemmas}
\crefname{cor}{Corollary}{Corollaries}
\theoremstyle{definition}
\newtheorem{defn}[thm]{Definition}
\crefname{defn}{Definition}{Definitions}
\theoremstyle{remark}
\newtheorem{rem}[thm]{Remark}
\newtheorem{fact}[thm]{Fact}
\crefname{rem}{Remark}{Remarks}
\crefname{exa}{fact}{Fact}
\title{Monotonicity and a Taylor approximation theorem for transseries}
\author{Vincenzo Mantova}
\date{8 May 2026}
\thanks{Supported by EPSRC grant EP/T018461/1. For the purpose of open access, the authors has applied a creative commons attribution (CC BY) licence to any author accepted manuscript version arising. No data is associated with the article.}
\begin{document}

\begin{abstract}
  We show that the composition of omega-series by surreal numbers, or more generally by elements of any confluent field of transseries, is monotonic in its second argument. In particular, omega-series and LE-series interpreted as functions have the intermediate value property. We also deduce a Taylor approximation theorem for omega-series with maximal radius of validity.
\end{abstract}

\maketitle

\addtocontents{toc}{\protect\setcounter{tocdepth}{0}}
\section{Introduction}

Transseries are a generalisation of formal power series that provide a framework for asymptotic analysis of certain classes of non-oscillating germs, one notable example being first return maps in Hilbert's sixteenth problem. There are a few different flavours in the literature, such as LE-series and grid-based series (an incomplete list is~\cite{DMM1997,Hoe1997,Kuh2000,Schmeling2001}), but for the most generality we will work with \emph{omega-series} from~\cite{BM2019}, which are a little easier to define, and contain isomorphic copies of most other flavours.

Call \textbf{field of transseries} $\Tb$ a subfield of some (restricted) Hahn field
\[ \Tb = \Rb\fspc{\Mf}_{\On} = \left\{ \sum_{i < \alpha} r_i \mf_i : \alpha \text{ ordinal}, r_i \in \Rb^{\neq 0}, \mf_i \in \Mf \ \text{with}\ i < j \to \mf_i > \mf_j \right\}, \]
where the \textbf{monomials} $\Mf$ form an ordered abelian group equipped with an isomorphism $\log : (\Mf,\cdot) \simeq \Jb_{\Mf} \coloneqq (\Rb\fspc{\Mf^{>1}}_\On,+)$ satisfying ${\log(\mf)}^n < \mf$ for every $n \in \Nb$ (we call its inverse $\exp$ and abbreviate $e^\gamma = \exp(\gamma)$). We warn the reader that such $\Mf$ must be a proper class; to avoid this issue, one can require, for instance, that the above ordinals $\alpha$ are countable. Plenty such fields exist, one notable example being Conway's field of surreal numbers $\No$.

Now suppose that $\Mf$ contains a \textbf{log-atomic} element $T \in \Mf^{>1}$, namely such that $\log^{\circ n}(T) \in \Mf$ for all $n \in \Nb$, where $\log^{\circ n}$ is the $n$-fold composition of $\log$. Then the field $\OM$ of \textbf{omega-series} in $T$ is the smallest subclass of $\Tb$ that contains $\Rb$, $T$ and is closed under taking $\exp$, $\log$, and sums like $\sum_{i < \alpha} r_i \mf_i$. $\OM$ is automatically of the form $\Rb\fspc{\Of}_\On$ for some subgroup $\Of \leq \Mf$. The choice of $T$ and of ambient $\Tb$ is irrelevant: all fields of omega-series are isomorphic to each other as fields of transseries.

By construction, $\OM$ contains, for instance, any formal Laurent series in $T^{-1}$, such as $T + 1 + T^{-1} + T^{-2} + \ldots$, but also series containing exponential terms, such as the asymptotic expansion of the $\Gamma$ function at $+\infty$:
\[ \sqrt{2\pi} e^{T\log(T) - T - \frac{1}{2}\log(T)}\left(1 + \frac{1}{12T} + \frac{1}{288T^2} + \cdots \right). \]
By inspecting the constructions in the literature, one can easily verify that $\OM$ also contains (unique) copies of grid-based transseries (\cite{Hoe1997,Hoe2006}), of $\LE$-series (\cite{DMM1997,DMM2001}), and even $\mathrm{EL}$-series (\cite{Kuh2000}). One cannot forget mentioning that $\LE$-series and grid-based transseries have been subject of intense model theory investigations and, as differential ordered valued fields, they are model complete and have the same theory as any maximal Hardy field~\cite{ADH2017,ADH2024}.

Note that $\log$ can be extended naturally to a \emph{global} function on the positive transseries: given $f \in \Tb^{>0}$, it can we written uniquely as $f = r\mf(1 + \varepsilon)$, where $r\mf$ is the \textbf{leading term} of $f$, with $r \in \Rb^{>0}$ and $\mf \in \Mf$, and $|\varepsilon| < \Rb^{>0}$, and we can define
\[ \log(f) = \log(r\mf(1 + \varepsilon)) = \log(\mf) + \log(r) + \sum_{n=1}^\infty {(-1)}^{n+1}\frac{\varepsilon^n}{n} \]
where $\log(r)$ is standard real logarithm, and the infinite sum is always well defined, although we gloss over the details of what that means. Its inverse $\exp$ has a similar global extension given by a Taylor series. Since $\OM$ is closed under infinite sums, it is also closed under global $\exp$ and $\log$.

Just like traditional power series can be differentiated and composed, the same is true for omega-series. There is a unique derivation $f \mapsto f'$ on $\OM$ (\cite{Schmeling2001,BM2018}) satisfying $T' = 1$, $(e^\gamma)' = e^\gamma \gamma'$ for any $\gamma \in \Jb$, and which is \textbf{strongly $\Rb$-linear}, namely
\[ {\left(\sum_{i < \alpha} r_i \mf_i\right)}' = \sum_{i < \alpha} r_i \mf_i'. \]

Likewise, given $g \in \OM$ with $g > \Rb$, there is a unique right composition map $f \mapsto f \circ g$ satisfying $T \circ g = g$, $e^\gamma \circ g = e^{\gamma \circ g}$ for $\gamma \in \Jb$, and which is strongly $\Rb$-linear
\[ {\left(\sum_{i < \alpha} r_i \mf_i\right)} \circ g = \sum_{i < \alpha} r_i (\mf_i \circ g). \]
More generally, let $\Ub = \Rb\fspc{\Nf}_{\On}$ be a \textbf{confluent} field of transseries, namely such that for every $x \in \Ub^{>\Rb}$ there is $n \in \Nb$ such that the leading term of $\log^{\circ n}(x)$ is log-atomic (this holds in $\OM$, as by construction, $\log^{\circ n}(x)$ has leading monomial of the form $\log^{\circ k}(T)$ for $n$ sufficiently large). Note that $\Ub$ may be a new field, different from $\Tb$, although it will also contain copies of $\OM$ obtained by replacing $T$ with log-atomic monomials of $\Ub$. Then for every $x \in \Ub^{>\Rb}$ there is a unique map $f \mapsto f \circ x$ satisfying the above properties with $x$ in place of $g$ (while the uniqueness is obvious, the existence is technically challenging, see~\cite{Schmeling2001,BM2019}; while~\cite{BM2019} is stated for $\No$, and seemingly using the assumption `T4', the proof therein only uses confluence). Surreal numbers are an example of confluent field (\cite{BM2018}).

Finally, we remark that composition and derivation are compatible with global $\exp$, namely $\exp(f)' = \exp(f)f'$ and $\exp(f) \circ g = \exp(f \circ g)$ for any $f \in \OM$, and also with each other, in the sense of the \textbf{chain rule}: for any $f, g \in \OM$ with $g > \Rb$ we have $(f \circ g)' = (f' \circ g)g'$. Moreover, composition is compatible `with itself' in the sense that it is associative: $(f \circ g) \circ x = f \circ (g \circ x)$ for any $f \in \OM$, $g \in \OM^{>\Rb}$, $x \in \Ub^{>\Rb}$.

It is immediate from the definition that \emph{right} composition by some $x \in \Ub^{>\Rb}$ must be an ordered field embedding: by construction, the map $f \mapsto f \circ x$ preserves (infinite) sums, scalar multiplication by $\Rb$, $\exp$, $\log$, and it quickly follows that it preserves multiplication and ordering.

However, the properties of \emph{left} composition by some $f \in \OM$, namely of the map $x \mapsto f \circ x$, have not been explored as much. Since transseries have been introduced to provide asymptotic expansions of real functions, we expect these maps to share at least the most basic properties. For instance, we expect the map to be strictly increasing exactly when $f' > 0$. To the best of my knowledge, this only appears in a rarely cited preprint of Edgar~\cite[Prop.\ 4.9]{Edg2009}, for $\LE$-series only, under the heading `Simpler proof needed', and with an `overly-involved proof'.

Addressing Edgar's need, this note gives a short, self-contained proof of monotonicity for general omega-series. We also spell out a small set of inductive assumptions, so that the method can be reapplied more easily to fields larger than $\OM$, in particular with an eye to the hyperseries of~\cite{BHK2021}, which generalise transseries by adding transexponential functions. From now on, let $\Ub$ be a fixed confluent field of transseries, for instance $\Ub = \No$, or more generally a field of transseries equipped with a left composition by omega-series that preserves infinite sums, scalar multiplication by $\Rb$, $\exp$, and $\log$.

\begin{mainthm}[Monotonicity]\label{monotonicity}
  For all $f \in \OM$, the function $x \mapsto f \circ x$ for $x \in \Ub^{>\Rb}$ is strictly increasing if $f' > 0$, strictly decreasing if $f' < 0$, and constant if $f' = 0$.
\end{mainthm}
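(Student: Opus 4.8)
The plan is to reduce to the single case $f' > 0$ and then to compare $f\circ x_1$ and $f\circ x_2$ for $x_1 < x_2$ by a first-order Taylor estimate, bootstrapped from infinitesimal increments to arbitrary ones. First I would dispose of the easy cases. Since $\circ x$ is strongly $\Rb$-linear, $(-f)\circ x = -(f\circ x)$, so the statement for $-f$ (with $(-f)' = -f'$) is equivalent to the statement for $f$ with the direction reversed; this reduces the decreasing case to the increasing one. For $f' = 0$, the kernel of the derivation on $\OM$ is exactly $\Rb$ (as $1' = (e^0)' = 0$ while $T' = 1$), so $f\in\Rb$ and $f\circ x = f$ is constant. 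It remains to prove: if $f' > 0$ then $x_1 < x_2$ implies $f\circ x_1 < f\circ x_2$.

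Next, the analytic core. Fix $x := x_1$ and put $\delta := x_2 - x_1 > 0$. When $\delta$ is small relative to $x$ (within the radius of validity of the Taylor expansion), I would expand
\[ f\circ(x+\delta) = f\circ x + (f'\circ x)\,\delta + \bigl(\text{higher-order terms}\bigr), \]
where, for such $\delta$, the first-order term $(f'\circ x)\delta$ strictly dominates the remainder. Since $\circ x$ is an ordered field embedding, $f' > 0$ gives $f'\circ x > 0$, and $\delta > 0$; hence the leading term, and therefore the whole difference $f\circ x_2 - f\circ x_1$, is positive. This settles all pairs whose relative gap $\delta/x$ is infinitesimal.

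The main obstacle is the global step: the Taylor series has only a finite radius, so a single expansion cannot reach $x_2$ when $x_2 \succeq x_1$ (e.g.\ $x_2 = e^{x_1}$), and no bounded number of expansions suffices across exp-levels. I would attack this on two fronts. To shrink a multiplicative gap, rewrite $f\circ x = (f\circ\exp^{\circ n})\circ\log^{\circ n}x$: the outer series $g := f\circ\exp^{\circ n}\in\OM$ satisfies $g' = (f'\circ\exp^{\circ n})\,(\exp^{\circ n})' > 0$ by the chain rule in $\OM$, while $\log^{\circ n}$ turns the ratio $x_2/x_1$ into the additive gap $\log^{\circ n}x_2 - \log^{\circ n}x_1$, which becomes infinitesimal once the points sit at the same exp-level, so the infinitesimal case applies to $g$. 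To cross exp-levels I would run an induction on the exp-log complexity of $f$, peeling one $\exp$ or $\log$ layer at a time: since $e^\gamma\circ x = e^{\gamma\circ x}$ with $y\mapsto e^y$ increasing, monotonicity of $x\mapsto\gamma\circ x$ transfers to $x\mapsto e^\gamma\circ x$, and $(e^\gamma)' = e^\gamma\gamma'$ keeps the sign of the derivative; confluence of $\Ub$ grounds the base case by reducing every $x\in\Ub^{>\Rb}$ to an expression with log-atomic leading monomial.

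The \emph{heart} of the argument is that $f$ is an infinite sum, so these reductions must be interleaved with a leading-term analysis of the tail: writing $f = r\mf(1+\varepsilon)$ and using that $\circ x$ sends the infinitesimal $\varepsilon$ to an infinitesimal, one must show that the sign of $f\circ x_2 - f\circ x_1$ is governed by the dominant monomial $\mf$ (itself of the form $e^\gamma$, recursing into lower complexity) and that the tail cannot overturn it. Conceptually the whole scheme is the formal analogue of the fundamental theorem of calculus, morally $\tfrac{d}{dt}(f\circ t) = f'\circ t$ together with positivity of integration, and making this rigorous for arbitrarily large increments is precisely the work that the Taylor approximation theorem is designed to supply.
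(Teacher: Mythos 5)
Your proposal correctly handles the reductions ($f'<0$ to $f'>0$ via $-f$; $f'=0$ iff $f\in\Rb$), but the two steps that carry all the weight are asserted rather than proved, and the first is circular relative to this paper. The estimate $f\circ(x+\delta)-f\circ x\sim(f'\circ x)\delta$ is \Cref{taylor-first-term}, whose proof goes through \Cref{leading-gap}, which is itself a corollary of \Cref{monotonicity}; you cannot invoke the Taylor approximation theorem here, since it lies downstream of the statement you are proving. Moreover the estimate is false under the smallness hypothesis you state ($\delta\prec x$): for $f=e^T$ and $1\prec\delta\prec x$ one has $e^{x+\delta}-e^{x}\asymp e^{x+\delta}\succ e^{x}\delta$, so the first-order term does not dominate; the correct condition is $(f^\dagger\circ x)\delta\prec 1$, which for a fixed pair $x_1<x_2$ fails for some $f$. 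Your global step does not repair this: iterated logarithms do not turn an arbitrary gap into an infinitesimal one (take $x_2=\exp(x_1)$, so that $\log^{\circ n}x_2-\log^{\circ n}x_1\sim\log^{\circ (n-1)}x_1>\Rb$ for every $n$), and the claim that the tail of $f=r\mf(1+\varepsilon)$ ``cannot overturn'' the leading monomial is precisely the hard point you leave open (it also needs the caveat $\mf\neq 1$, cf.\ the hypothesis $1\not\asymp f$ in \Cref{leading-gap}).

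The paper's route avoids both problems by never requiring the increment $x-y$ to be small. It normalises $f$ rather than the points: by confluence, $\log^{\circ n}(T)\circ f\circ\exp^{\circ k}(T)=T+\overline{\varepsilon}$ with $\overline{\varepsilon}\prec 1$, so after peeling the (trivially monotonic) outer logarithms and exponentials one only has to show $\varepsilon\circ x-\varepsilon\circ y\prec x-y$. That is done with a deliberately weak, one-sided mean-value inequality for purely infinite $\gamma$ (\Cref{weak-mvp}: $\frac{\gamma\circ x-\gamma\circ y}{x-y}\preceq\gamma'\circ y$ under a case split), proved by induction on the exponential rank with concavity of $\log^{\circ n}$ as base case --- an upper bound valid for \emph{all} $x>y>\Rb$, rather than an asymptotic equivalence valid only for small increments. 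To salvage your plan you would need to replace the Taylor estimate by such an inequality, proved by induction on $\ER$, before touching general $f$.
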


Since $\OM$ is generated from $T$, a naive proof by induction would be as follows: the conclusion is true for the base case $f = T$, $f = \log(T)$, \ldots; if $f = \sum_{i < \alpha} r_i e^{\gamma_i}$, we are free to assume the conclusion for each map $x \mapsto \gamma_i \circ x$, and we would like to deduce that $x \mapsto f \circ x$ is also monotonic. This works very well for $f \in \Jb_\Of^{>0}$, and we do so in \Cref{monotonicity-J}, but the inductive assumption falls short for the general case. We also need the following very weak form of mean value property: given $\gamma \in \Jb_\Of^{>0}$ and $x, y \in \Ub$ with $x > y > \Rb$, we have
\[ \frac{\gamma \circ x - \gamma \circ y}{x - y} \preceq \gamma' \circ y \quad \text{at least when} \quad \gamma \circ x - \gamma \circ y \preceq 1 \text{ or } (\gamma' \circ y)(x - y) \prec 1, \]
where $a \preceq b$ means $|a| \leq n|b|$ for some $n \in \Nb$ and $a \prec b$ means $n|a| < |b|$ for all $n \in \Nb$. The condition is almost trivial to verify in the base case of $T$, $\log(T)$, $\ldots$, as the associated functions $x \mapsto \log^{\circ n}(x)$ are all strictly increasing and \emph{concave}. The rest of the induction is then relatively straightforward (see \Cref{weak-mvp}), and it leads to a fairly short proof of \Cref{monotonicity}.

Since omega-series admit compositional inverses by~\cite{Bag2025}, namely for every $f \in \OM^{>\Rb}$ there is a (unique) $f^{\operatorname{inv}} \in \OM^{>\Rb}$ such that $f \circ f^{\operatorname{inv}} = f^{\operatorname{inv}} \circ f = T$, we can immediately deduce the following.

\begin{maincor}\label{ivp}
  For all $f \in \OM$, the function $x \mapsto f \circ x$ for $x \in \Ub^{>\Rb}$ has the intermediate value property: if $f \circ x \leq w \leq f \circ y$, there is $z \in \Ub$ between $x$ and $y$ such that $f \circ z = w$.
\end{maincor}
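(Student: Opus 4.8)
The plan is to deduce \Cref{ivp} from \Cref{monotonicity} together with the existence of compositional inverses from~\cite{Bag2025}, which the excerpt has just invoked. The statement to prove is essentially that left composition by a fixed $f$, viewed as a map on $\OM^{>\Rb}$, attains every intermediate value. The natural strategy is to reduce to a fixed point / substitution argument: I want to find a genuine transseries $h$ lying between $g$ and $\tilde{g}$ at which $f \circ h = a$, exploiting that the candidate input values are themselves omega-series, so I can compose them freely.

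First I would dispose of the degenerate cases. If $f' = 0$ then $f$ is constant as a function, so $f \circ g = f \circ \tilde{g} = a$ forces the value to already be attained (take $h = g$). If $f' \neq 0$, by \Cref{monotonicity} the map $g \mapsto f \circ g$ is strictly monotone on $\OM^{>\Rb}$; without loss of generality assume $f' > 0$, so it is strictly increasing (the decreasing case is symmetric, or follows by replacing $f$ with $-f$). Strict monotonicity already guarantees that the $h$ we seek, if it exists, is unique and lies strictly between $g$ and $\tilde{g}$; what remains is pure \emph{surjectivity onto the interval}.

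The key step is the construction of $h$ itself, and here is where I expect the real content to lie. Since $g \leq \tilde g$ (from $f\circ g \le a \le f \circ \tilde g$ and monotonicity), and both are positive infinite, I want an omega-series $h$ with $g \le h \le \tilde g$ and $f \circ h = a$. The clean way is to invert $f$: by~\cite{Bag2025} choose $g_0 \in \OM^{>\Rb}$ with $f \circ g_0 = T$, so that $f \circ (g_0 \circ x) = (f \circ g_0) \circ x = T \circ x = x$ for any admissible $x$, i.e.\ $g_0$ provides a left inverse realising each value. The subtlety is that $a$ need not be a \emph{positive infinite} omega-series: $a$ is merely an element of $\Ub$ (or of the relevant value field) sandwiched between $f \circ g$ and $f \circ \tilde g$, and $g_0 \circ a$ only makes sense when $a > \Rb$. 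I would therefore handle this by first \emph{translating} so that the relevant inputs are large: pick $c \in \Rb$ with $f \circ g, a, f \circ \tilde g$ all exceeding $\Rb$ after replacing $f$ by $f + (T - c)$ or by pre/post-composing with a shift $T + c$, preserving $f' > 0$ and the ordering, so that inversion applies. Then set $h = g_0 \circ a$ (for the shifted data), verify $f \circ h = a$ by the inverse relation, and confirm $g \le h \le \tilde g$ by applying strictly increasing $g_0$ (again from \Cref{monotonicity}, since $g_0' > 0$ as $f \circ g_0 = T$ forces $(f'\circ g_0)g_0' = 1$, whence $g_0' > 0$) to the chain $f \circ g \le a \le f \circ \tilde g$.

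The main obstacle, and the step I would scrutinise most, is ensuring the value $a$ lies in the correct domain for left composition by the inverse $g_0$, i.e.\ that $a \in \OM^{>\Rb}$ (or can be legitimately reduced to that case), since a priori $a$ is only an intermediate value and the inverse from~\cite{Bag2025} is a map $\OM^{>\Rb} \to \OM^{>\Rb}$. If in the intended reading $a \in \OM^{>\Rb}$ from the outset, the shift is unnecessary and the argument collapses to the single identity $h = g_0 \circ a$ with monotonicity supplying both membership in the interval and the solution; otherwise the translation-by-a-real dodge above, or a direct verification that $g_0$ extends continuously (in the asymptotic/strong sense) to the relevant sandwiched values, repairs the gap. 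Either way the proof is short: all the analytic weight has already been absorbed into \Cref{monotonicity} and the inversion theorem, and \Cref{ivp} is extracted by composing with the inverse and reading off membership in $[g,\tilde g]$ from strict monotonicity of $g_0$.
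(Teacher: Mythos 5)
Your main line is exactly the paper's: reduce to $f \in \OM^{>\Rb}$, take a compositional inverse $g_0$ with $f \circ g_0 = T$ from~\cite{Bag2025}, set $h = g_0 \circ a$ (legitimate because $a \geq f \circ g > \Rb$ when $f > \Rb$), and read off $g \leq h \leq \tilde{g}$ from \Cref{monotonicity} applied to $g_0$. For $f > \Rb$ this is correct and is essentially the published proof.

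The gap is in your reduction for the remaining case, where $f \notin \Rb$ but $f$ is finite or negative infinite (e.g.\ $f = T^{-1}$ or $f = e^{-T}$): then $f \circ g$, $a$, $f \circ \tilde{g}$ are \emph{not} positive infinite, and the inverse from~\cite{Bag2025} is not even available for such $f$. Neither of your proposed fixes works. Post-composing with $T + c$ only shifts the output by the real $c$, which can never turn a finite value into one exceeding $\Rb$. Replacing $f$ by $f + (T - c)$ changes the equation to be solved into $f \circ h + h - c = a'$, in which the unknown $h$ now also appears outside of $f$, so this is no longer the original problem. The correct reduction, which is what the paper does, is multiplicative rather than additive: after replacing $f$ by $-f$ if necessary, either $f > \Rb$ or there is $r \in \Rb$ with $0 \neq f - r \prec 1$; in the latter case $\tilde{f} = \pm\frac{1}{f - r}$ is positive infinite, the map $u \mapsto \pm\frac{1}{u - r}$ is an order isomorphism between the relevant ranges, it sends $a$ to a positive infinite $\tilde{a}$ (note $a \neq r$, since $f \circ g$ and $f \circ \tilde{g}$ lie strictly on one side of $r$ and $a$ is sandwiched between them), and $\tilde{f} \circ h = \tilde{a}$ is equivalent to $f \circ h = a$. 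With that substitution in place of your additive shift, the argument closes.
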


$\LE$-series are closed under composition and admit compositional inverses within $\LE$ (\cite{DMM2001}), thus the intermediate value property applies even for $x \mapsto f \circ x$ seen as a function from $\LE^{>\Rb}$ to $\LE$.

Given \Cref{monotonicity}, it also becomes relatively easy to prove the following form of Taylor's theorem. We write $a \asymp b$ to mean $a \preceq b$ and $b \preceq a$, and $O(b)$ to represent an unspecified element of the set $\{a : a \preceq b\}$.

\begin{mainthm}[Taylor approximation for $\OM$]\label{taylor}
  Let $f \in \OM$ be non-zero with $f \not\asymp T^k$ for all $k \in \Nb$, and let $x,\delta \in \Ub$ with $x > \Rb$, $x + \delta > \Rb$, $x + \delta \asymp x$, and $\frac{f' \circ x}{f \circ x}\delta \preceq 1$. Then for all $n \geq 0$,
  \[ f \circ (x + \delta) = \sum_{i = 0}^{n - 1} \frac{f^{(i)} \circ x}{i!}\delta^i + O\left((f^{(n)} \circ x)\delta^n\right). \]
\end{mainthm}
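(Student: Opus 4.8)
The plan is to reduce \Cref{taylor} to two separate facts and then combine them. Writing $S_n := \sum_{i=0}^n \frac{f^{(i)}\circ x}{i!}\delta^i$ for the Taylor polynomial and $R_n := f\circ(x+\delta) - S_n$ for the remainder, I would first establish (I) the \emph{Taylor identity} $f\circ(x+\delta) = \sum_{i\geq 0}\frac{f^{(i)}\circ x}{i!}\delta^i$ as a genuine strongly summable expansion in $\Ub$, and then (II) the \emph{comparison estimate} $\frac{f^{(i+1)}\circ x}{f^{(i)}\circ x}\,\delta \prec 1$ for every $i\geq 0$. Granting both, the summands $t_i := \frac{f^{(i)}\circ x}{i!}\delta^i$ satisfy $t_{i+1}\prec t_i$, so their dominant monomials strictly decrease; hence the tail $R_n = \sum_{i>n} t_i$ has dominant monomial that of $t_{n+1}$, giving $R_n \asymp t_{n+1} \asymp (f^{(n+1)}\circ x)\delta^{n+1}$, which is the desired $O(\cdot)$ (in fact a two-sided $\asymp$). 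Throughout I would use that $g\mapsto g\circ x$ is an ordered field embedding, so that $\prec$, $\preceq$ and $\asymp$ transfer verbatim between $\OM$ and $\Ub$.

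For (II), the case $i=0$ is exactly the hypothesis $\frac{f'\circ x}{f\circ x}\delta\prec1$, and the remaining cases I would deduce from a single lemma, stated purely in $\OM$: for $f\not\asymp T^k$ and all $i$,
\[ \left|\frac{f^{(i+1)}}{f^{(i)}}\right| \preceq \frac1T + \left|\frac{f'}{f}\right|. \]
Composing with $x$ and invoking the two hypotheses $\delta\prec x$ and $\frac{f'\circ x}{f\circ x}\delta\prec1$ then yields $\frac{f^{(i+1)}\circ x}{f^{(i)}\circ x}\delta\prec1$. To prove the lemma I would study the iterated logarithmic derivatives $\ell_i := f^{(i+1)}/f^{(i)}$, which obey the recursion $\ell_{i+1} = \ell_i + \ell_i'/\ell_i$, and run a trichotomy on the size of $f'/f=\ell_0$ against $1/T$: if $\ell_0\succ1/T$ then $\ell_i'/\ell_i\prec\ell_i$ and $\ell_i\asymp\ell_0$ throughout; if $\ell_0\prec1/T$ (the iterated-logarithm regime) then $\ell_i\asymp1/T$ for all $i\geq1$; and $\ell_0\asymp1/T$ corresponds to $f\asymp T^c$, which for $c\notin\Nb$ again gives $\ell_i\asymp1/T$. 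The exclusion $f\not\asymp T^k$ is exactly what is needed here: were the dominant term of $f$ an integer power $T^k$, all of its derivatives past order $k$ would vanish and $\ell_i$ could collapse to the exponentially small logarithmic derivative of a much lower term (as for $f=T^k+e^{-T}$), so that $f'/f$ would badly \emph{underestimate} the true radius of validity. I expect this lemma to be the main obstacle.

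For (I), I would argue by induction on the exponential height of $f$. By strong $\Rb$-linearity of composition it suffices to treat a single monomial $f=e^\gamma$ with $\gamma=\log\mf\in\Jb$, which has strictly smaller height. Writing $\eta := \gamma\circ(x+\delta)-\gamma\circ x$ and using $e^\gamma\circ(x+\delta)=e^{\gamma\circ(x+\delta)}=(e^\gamma\circ x)\,e^{\eta}$, the inductive identity for $\gamma$ gives $\eta=\sum_{j\geq1}\frac{\gamma^{(j)}\circ x}{j!}\delta^j$; moreover $\eta\preceq(\gamma'\circ x)\delta=\frac{f'\circ x}{f\circ x}\delta\prec1$ is infinitesimal by \Cref{weak-mvp} and the radius hypothesis, so $e^{\eta}=\sum_{k\geq0}\eta^k/k!$ is summable. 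Expanding $e^\eta$ and reorganising by powers of $\delta$ reproduces $\sum_i\frac{(e^\gamma)^{(i)}\circ x}{i!}\delta^i$ via Fa\`a di Bruno's formula, since the higher derivatives of $e^\gamma$ are exactly the Bell polynomials in the $\gamma^{(j)}$. The base cases are the iterated logarithms $\log^{\circ n}T$ and the real powers $T^c$, whose expansions are the classical logarithmic and binomial series and converge because $\delta\prec x$.

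Finally I would record where \Cref{monotonicity} does the work. It guarantees that each $y\mapsto f^{(i)}\circ y$ is strictly monotone, hence of constant sign near $x$ with no spurious cancellation, so that $f^{(i)}\circ(x+\delta)\asymp f^{(i)}\circ x$ under the radius hypothesis; this stability is what upgrades the remainder bound from a one-sided $\preceq$ to the sharp two-sided $\asymp$, and it also licenses reading off dominant monomials after composition. A minor secondary subtlety, which I would handle alongside (I), is the bookkeeping needed to see that the doubly-indexed family produced by expanding $e^\eta$ is strongly summable and may be rearranged; this is routine once the dominant monomials of the $t_i$ are known to strictly decrease by (II).
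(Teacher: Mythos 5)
Your step (II) is sound and matches what the paper proves (\Cref{iter-der-dagger} and \Cref{radius-of-convergence}), but step (I) contains a fatal gap: the exact identity $f\circ(x+\delta)=\sum_{i\geq 0}\frac{f^{(i)}\circ x}{i!}\delta^i$ is \emph{false} under the hypotheses of \Cref{taylor} — indeed the right-hand side is in general not even a summable family. The hypothesis $\frac{f'\circ x}{f\circ x}\delta\prec 1$ only controls the \emph{leading} monomial of $f$; it says nothing about the monomials deep in the support. Concretely, take $f=e^T+e^{-e^T}$ (so $f\asymp e^T\not\asymp T^k$ and $f^\dagger\sim 1$) and $\delta=1/x$, which satisfies $\delta\prec x$ and $(f^\dagger\circ x)\delta\prec 1$. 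The term $g=e^{-e^T}$ has $g^{(i)}\asymp e^{iT}e^{-e^T}$, so $\frac{g^{(i)}\circ x}{i!}\delta^i$ contributes monomials $\asymp e^{-e^x}(e^x/x)^i$, an infinite strictly $\prec$-increasing chain; the union of supports of the $t_i$ is therefore not reverse well-ordered and $\sum_i t_i$ does not exist in $\Ub$. Your own reduction already shows where this breaks: passing to a single monomial $e^\gamma$ of $f$, the inductive hypothesis needs $(\gamma'\circ x)\delta\prec 1$, and this simply does not follow from $(f^\dagger\circ x)\delta\prec 1$ for non-leading $\gamma$. This is not a repairable bookkeeping issue — it is the entire point of the theorem, which the introduction states explicitly: the exact equality holds only for a much smaller class of $\delta$'s (requiring smallness relative to \emph{every} monomial of $f$, treated in separate work), whereas the approximation with error term holds on the maximal domain.

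What the paper does instead, and what your argument is missing, is a splitting $f=f_0+f_1$, where $f_1$ collects exactly the monomials $r_\gamma e^\gamma$ with $(\gamma'\circ x)\delta\succeq 1$. One then shows, using \Cref{iter-der-dagger}\eqref{iter-der-dagger-large} applied to $f_1$ (whose logarithmic derivative is forced to be $\succ 1/T$), that both $f_1\circ x$ and $f_1\circ(x+\delta)$ are $\prec(f^{(n+1)}\circ x)\delta^{n+1}$, i.e.\ the bad part is absorbed wholesale into the error term without ever being Taylor-expanded; only $f_0$ is expanded, termwise and inductively, via the Taylor series of $\exp$ on an infinitesimal argument. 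A secondary consequence of the same gap: your claimed two-sided bound $R_n\asymp(f^{(n+1)}\circ x)\delta^{n+1}$ is stronger than the theorem and should not be expected, since the contributions of $f_1$ at $x$ and at $x+\delta$ are only controlled from above.
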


The restriction $f \not\asymp T^k$ is a mere technicality: since the derivatives of $T^k$ vanish after $k + 1$ steps, one must look at $f^{(k+1)}$ to determine the radius of validity of the approximation, which results in a slightly more complicated but functionally equivalent statement (see \Cref{taylor-general}).

Other versions of the Taylor theorem exist in the literature, but they focus on the \emph{equality} $f \circ (x + \delta) = \sum_{i = 0}^\infty \frac{f^{(i)} \circ x}{i!}\delta^i$, which in general holds for a much smaller class of $\delta$'s (see for example~\cite[\S 6]{DMM2001},~\cite[Prop. 7.13]{BM2019}). What is notable here is that the Taylor approximation is valid for the essentially largest possible meaningful class of $\delta$'s. Indeed, when $\delta$ is small as required by \Cref{taylor}, the sequence $(f^{(n)} \circ x)\delta^n$ is \emph{weakly decreasing} for $\prec$, and even \emph{strictly} decreasing if we require $\delta \prec x$ and $(f^\dagger \circ x)\delta \prec 1$, whereas it is \emph{strictly increasing} if $\delta \succ x$ or $(f^\dagger \circ x)\delta \succ x$, making the error terms larger rather than smaller (see \Cref{radius-of-convergence}). The conclusion may hold or fail in the remaining case $x + \delta \prec x$ depending on $f$ (\Cref{validity-at-boundary}). In separate work with Bagayoko~\cite{BM2025}, we tackle the equality $f \circ (x + \delta) = \sum_{i = 0}^\infty \frac{f^{(i)} \circ x}{i!}\delta^i$ for omega-series, and show it holds under $\delta \prec x$ and the strict inequality $(\mf^\dagger \circ x)\delta \prec 1$ with respect to every monomial $\mf$ appearing in $f$, rather than just $f$, expanding and generalising the numerous existing variants of the same result.

Several questions remain open. Notably, while we have seen that omega-series and $\LE$-series, when interpreted as functions, have the intermediate value property, we are not claiming that they have the \emph{mean value property}: given $z$ such that $f' \circ z = \frac{f \circ x - f \circ y}{x - y}$, \Cref{monotonicity} does not seem to imply directly that $z$ is between $x$ and $y$. This is related to whether $f'' \geq 0$ implies that the function $x \mapsto f \circ x$ is \emph{convex}, namely that $f' \circ y \leq \frac{f \circ x - f \circ y}{x - y}$ for $x > y$, and it may require a new argument.

Going beyond omega-series, it would be interesting to know if composition of elements in \emph{hyperserial fields} by \emph{hyperseries} (\cite{BHK2021}) is also monotonic as in \Cref{monotonicity}, and deduce the corresponding generalisation of \Cref{taylor}.

In this context, the first question is whether the weak mean value property used in the proof holds for the basic hyperlogarithmic monomials `$\ell_\alpha$', which represent, in a suitable sense, the `$\alpha$-th iterates of $\log$'. While the condition is satisfied locally, by the local Taylor expansion of the transfinite iterates of $\log$, it does not seem obvious at a first glance that it has to hold globally. For the usual $\log$, there is no issue because of the functional equation $\log(xy) = \log(x) + \log(y)$, but from $\ell_\omega$ onwards functions are less well behaved. In principle, this may require additional restrictions on the growth properties in the definition of hyperserial fields, and one should check if they are already satisfied in the surreal model of~\cite{BH2023}.

A similar, and potentially equivalent consideration emerges from \Cref{monotonicity-J} (monotonicity for $\gamma \in \Jb^{>0}$): its proof relies on the fact that for $x > y > \Rb$ and $z > w > \Rb$, if $y > w$ and $x - y > z - w$, then $\frac{\exp(x)}{\exp(y)} > \frac{\exp(z)}{\exp(w)} > 1$. Once again, this property seems easy to verify locally for transfinite iterations of $\exp$, but it is less clear whether it can be extended globally, due to the lack of a functional equation like $\exp(x + y) = \exp(x)\exp(y)$.

\subsection*{Acknowledgements} I thank Dino Peran, Jean-Philippe Rolin, and Tamara Servi for inviting me to work on normal forms. \Cref{taylor} was formulated only after it became clear it was the right statement to handle normal forms of omega-series. I also thank Vincent Bagayoko for the numerous helpful discussions on transseries and hyperseries, and for this note in particular, for helping me clarify that the bounds in \Cref{taylor} are sharp (see \Cref{iter-der-dagger}). I also thank an anonymous referee for the careful reading, which led to the removal of redundant assumptions in \Cref{monotonicity} and \Cref{ivp}. Part of this research was carried out while the author was at the IHP trimester `Model theory, combinatorics and valued fields' in 2018 with support from the \foreignlanguage{french}{Fondation Sciences Mathématiques de Paris-FSMP}. The author was also supported by the Engineering and Physical Sciences Research Council grant EP/T018461/1.

\tableofcontents

\addtocontents{toc}{\protect\setcounter{tocdepth}{1}}
\section{Some preliminaries}

Below are terminology, notations, and facts that will be used throughout.

As anticipated in the introduction, we shall use the following notation for the \textbf{dominance relation}: for $a,b$ in any ordered ring, we write
\begin{itemize}
  \item $a \preceq b$ if $|a| \leq n|b|$ for some $n \in \Nb$ (a total partial order);
  \item $a \asymp b$ if $a \preceq b$ and $b \preceq a$ (an equivalence relation);
  \item $a \prec b$ if $a \preceq b$ and $b \not\preceq a$; equivalently, $n|a| < |b|$ for all $n \in \Nb$ (a strict partial order);
  \item $a \sim b$ if $a - b \prec a$ (an equivalence relation on the non-zero elements);
  \item $O(a)$ represents the convex class $\{b : b \preceq a\}$, and similarly $o(a)$ represents $\{b : b \prec a\}$; both shall be used as in the big $O$ notation.
\end{itemize}

\begin{rem}
  Right composition by a fixed $x \in \Ub^{>\Rb}$ yields an ordered exponential field embedding. In particular, we also have for instance that $f \prec g$ holds if and only if $f \circ x \prec g \circ x$, and as a special case, $f \prec T$ if and only if $f \circ x \prec x$. The same holds for all of the above relations, since they are solely defined on the basis of the underlying ordered field structure. This will be used liberally in the proofs.
\end{rem}

\begin{fact}
  As an ordered differential field, $\OM$ is an $H$-field, namely $f > \Rb$ implies $f' > 0$, otherwise $f = r + \varepsilon$ where $r' = 0$ (in fact, $r \in \Rb$) and $|\varepsilon|$ is smaller than all the constants (that is, $\varepsilon \prec 1$). This has numerous consequences, but the reader will only need to know that:
  \begin{itemize}
    \item if $1 \not\asymp f$, then $f \succeq g$ if and only if $f' \succeq g'$, and if $f \succ g$ if and only if $f' \succ g'$;
    \item if $1 \not\asymp f$, then $f \asymp g$ if and only if $f' \asymp g'$;
    \item if $1 \not\asymp f$, then $f \sim g$ if and only if $f' \sim g'$;
    \item if $f \preceq 1$, then $f' \prec 1$;
    \item if $f \prec 1$, $g \neq 0$, $g \not\asymp 1$, then $f' \prec \frac{g'}{g}$.
  \end{itemize}
\end{fact}

Since $\OM$ is generated by $T$, most arguments use induction on how elements are constructed starting from $T$. We formalise this with the following rank. Since there is no risk of ambiguity, we shall abbreviate $\Jb_\Of = \Rb\fspc{\Of^{>1}}_\On$ with just $\Jb$.

\begin{defn}
  For any $f = \sum_{i < \alpha} r_i e^{\gamma_i} \in \OM$, where each $r_i$ is a non-zero real number and $\gamma_i \in \Jb$, we define the \textbf{exponential rank} $\ER(f)$ of $f$ to be the ordinal:
  \begin{itemize}
    \item $0$ if $f$ is a monomial of the form $\log^{\circ n}(T)$ for some $n \in \Nb$, or if $f = 0$;
    \item $\sup\{\ER(\gamma_i) + 1 : i < \alpha \}$ otherwise.
  \end{itemize}
\end{defn}
This is clearly well defined (see~\cite{BM2019} for more details).

\begin{rem}
  It is immediate from the definition that for $f, g \in \OM$ we have $\ER(f + g) \leq \max\{\ER(f),\ER(g)\}$, unless $\ER(f) = \ER(g) = 0$, in which case $\ER(f + g) \leq 1$. Similarly, $\ER(-f) \leq \ER(f)$ unless $\ER(f) = 0$, in which case $\ER(-f) = 1$. In particular $\ER(f - g) \leq \max\{\ER(f),\ER(g)\}$ unless $\ER(f) = \ER(g) = 0$.
\end{rem}

We do not define Hahn fields here, but we refer the reader to any of the cited sources about transseries for details about the definition of sum, product, and order on them. We just remind the reader that the set of monomials appearing in a series $f = \sum_{i < \alpha} r_i \mathfrak{m}_i$, meaning $\{\mathfrak{m}_i : i < \alpha\}$, is called \textbf{support} of $f$ (note that by how we defined fields of transseries, the support of a single series is always a \emph{set} even if the monomials range in a proper class). When $f \neq 0$, we call the maximum $\mathfrak{m}_0$ of the support the \textbf{leading monomial} of $f$, and we call $r_0\mathfrak{m}_0$ the \textbf{leading term} of $f$. Note that by construction, $f \sim r_0\mathfrak{m}_0$.

For clarity, we also remark again that $\exp$ and $\log$ have the following Taylor expansions for any $\varepsilon \prec 1$ in $\Ub$:
\begin{align*}
  \log(1 + \varepsilon) &= \sum_{n=1}^\infty {(-1)}^{n+1} \frac{\varepsilon^n}{n}, \\
  \exp(\varepsilon) &= \sum_{n=1}^\infty \frac{\varepsilon^n}{n!}.
\end{align*}
The infinite sum on the right is not a limit with respect to the topology induced by the order, but an algebraic operation in which each power $\varepsilon^n$ is expanded into a series, and then all series are summed term by term. For the details of how this is done, and why it is well defined, we defer again to the bibliography. Recall that $\OM$ is closed under infinite sums, and so in particular under the above ones.

\section{Monotonicity for purely infinite series}

We first prove monotonicity in the easier case of purely infinite series. This argument is a straightforward naive induction.

\begin{prop}\label{monotonicity-J}
  For all $\gamma \in \Jb^{>0}$, the map $x \mapsto \gamma \circ x$ is strictly increasing.
\end{prop}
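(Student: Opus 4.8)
The plan is to induct on the exponential rank $\ER(\gamma)$. For the base case $\ER(\gamma) = 0$ we have $\gamma = \log^{\circ n}(T)$ for some $n$, so that $\gamma \circ x = \log^{\circ n}(x)$, and the claim reduces to the strict monotonicity of $\log$ on $\Ub^{>\Rb}$: if $x > y > \Rb$ then $\log(x) - \log(y) = \log(x/y) > 0$ because $x/y > 1$, and $\log$ maps $\Ub^{>\Rb}$ into itself, so one may iterate.

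For the inductive step, write $\gamma = \sum_{i<\alpha} r_i e^{\gamma_i}$ with $e^{\gamma_0} > e^{\gamma_1} > \cdots$; then each $\gamma_i \in \Jb^{>0}$ with $\ER(\gamma_i) < \ER(\gamma)$, the $\gamma_i$ are strictly decreasing, and $r_0 > 0$ since $\gamma > 0$. Fix $x > y > \Rb$ and abbreviate $a_i = \gamma_i \circ x$, $b_i = \gamma_i \circ y$, and $w_i = e^{a_i} - e^{b_i}$. By strong $\Rb$-linearity and compatibility with $\exp$, one has $\gamma \circ x - \gamma \circ y = \sum_{i<\alpha} r_i w_i$. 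The induction hypothesis applied to each $\gamma_i$ gives $a_i > b_i$, hence $w_i > 0$; in particular the leading term $r_0 w_0$ is positive. The goal is to show that $r_0 w_0$ dominates the remainder, so that the whole difference inherits its sign.

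The crux is the uniform estimate $w_i \prec w_0$ for $i \geq 1$. Using the functional equation $e^{p-q} = e^{p}/e^{q}$, I would factor $w_i/w_0 = e^{b_i - b_0}\cdot\frac{e^{a_i - b_i}-1}{e^{a_0 - b_0}-1}$. Here $b_0 - b_i = (\gamma_0 - \gamma_i)\circ y > \Rb$, because $\gamma_0 - \gamma_i \in \Jb^{>0}$ and composition by $y$ fixes the reals and preserves order, so $e^{b_i - b_0} \prec 1$. Moreover the induction hypothesis applied to $\gamma_i$, to $\gamma_0$, and crucially to $\gamma_0 - \gamma_i$ (all of rank $< \ER(\gamma)$) yields $0 < a_i - b_i < a_0 - b_0$, whence the second factor lies in $(0,1)$. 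Thus $0 < w_i/w_0 < e^{b_i - b_0} \leq e^{-(\gamma_0 - \gamma_1)\circ y} \prec 1$ uniformly in $i \geq 1$, so every monomial occurring in the tail $\sum_{i \geq 1} r_i w_i$ lies strictly below the leading monomial of $w_0$. Therefore $\gamma \circ x - \gamma \circ y \sim r_0 w_0$, whose leading coefficient is $r_0$ times the positive leading coefficient of $w_0$, and hence is positive, giving $\gamma \circ x > \gamma \circ y$.

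The main obstacle, and the reason a naive ``compare leading monomials'' argument fails, is that after composition the $e^{a_i}$ are no longer monomials: $a_i$ acquires a real and an infinitesimal part, so $w_0 = e^{a_0} - e^{b_0}$ may be far smaller than $e^{a_0}$ through cancellation, and the crude bound $w_i \preceq e^{a_i}$ is then too lossy to overcome it. The resolution is exactly the ratio estimate above, whose decisive input is the monotonicity of $x \mapsto (\gamma_0 - \gamma_i)\circ x$ furnished by the induction hypothesis; this is the concrete shape in which the stated fact about ratios of exponentials enters the proof.
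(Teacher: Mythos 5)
Your proposal is correct and follows essentially the same route as the paper: induction on exponential rank, with the key step being the factorisation $e^{\gamma_i\circ x}-e^{\gamma_i\circ y}=e^{\gamma_i\circ y}(e^{\gamma_i\circ x-\gamma_i\circ y}-1)$ and the application of the inductive hypothesis to the differences $\gamma_0-\gamma_i$ to show the leading term dominates. The only cosmetic difference is that you phrase the comparison as a ratio bound $w_i/w_0\prec 1$ rather than the paper's direct $\succ$ comparison of the two factors, which if anything makes the summation over the tail slightly more explicit.
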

\begin{proof}
  Fix some  $x, y \in \Ub$ with $x > y > \Rb$. We prove by induction on $\ER(\gamma)$ that $\gamma \circ x > \gamma \circ y$. Since by construction $0 < \log(x) - \log(y) < x - y$, by iterating $\log$, we find that for all $k > n > 0$
  \[ 0 < \log^{\circ k}(x) - \log^{\circ k}(y) < \log^{\circ n}(x) - \log^{\circ n}(y). \]
  Therefore, the statement is true for $\ER(\gamma) = 0$, and even for $\gamma = \delta - \eta$ when $\ER(\delta) = \ER(\eta) = 0$ and $\delta > \eta$.

  Now let $\gamma \in \Jb^{>0}$ have rank $\ER(\gamma) > 0$. We assume by induction that the conclusion holds for any $\delta$ such that $\ER(\delta) < \ER(\gamma)$. Let $re^{\delta}$ be the leading term of $\gamma$. Recall that $\ER(\delta) < \ER(\gamma)$.

  We claim that $\gamma \circ x - \gamma \circ y \sim re^{\delta \circ x} - re^{\delta \circ y}$. Let $e^{\eta}$ be any monomial in the support of $\gamma$ distinct from $e^{\delta}$ (if there is no such monomial, then the conclusion is obvious as $\gamma = re^\delta$). Again $\ER(\eta) < \ER(\gamma)$, and moreover $\delta > \eta > 0$, since $\gamma \in \Jb$. We have one of $\ER(\delta) = \ER(\eta) = 0$ or $\ER(\delta - \eta) \leq \max\{\ER(\delta),\ER(\eta)\} < \ER(\gamma)$, hence by inductive hypothesis
  \[ (\delta-\eta) \circ x > (\delta-\eta) \circ y, \quad \text{that is} \quad \delta \circ x - \delta \circ y > \eta \circ x - \eta \circ y. \]
  Moreover, $\eta \circ x - \eta \circ y > 0$, again by inductive hypothesis. Therefore,
  \[ e^{\delta \circ x - \delta \circ y} - 1 > e^{\eta \circ x - \eta \circ y} - 1 > 0, \quad \text{hence} \quad e^{\delta \circ x - \delta \circ y} - 1 \succeq e^{\eta \circ x - \eta \circ y} - 1, \]
  since $\exp$ is an increasing function and $e^a > 1$ for $a > 0$.

  Since $(\delta - \eta) \circ y > \Rb$, we have $e^{(\delta - \eta) \circ y} > \Rb$, or in other words, $e^{\delta \circ y} \succ e^{\eta \circ y}$. Combining the inequalities together,
  \[ e^{\delta \circ x} - e^{\delta \circ y} = e^{\delta \circ y}(e^{\delta \circ x - \delta \circ y} - 1) \succ e^{\eta \circ y}(e^{\eta \circ x - \eta \circ y} - 1) = e^{\eta \circ x} - e^{\eta \circ y}. \]
  It follows at once that $\gamma \circ x - \gamma \circ y \sim re^{\delta \circ x} - re^{\delta \circ y}$.

  Granted the claim, by inductive hypothesis, $\delta \circ x > \delta \circ y$. Recall moreover that $r > 0$, since $\gamma > 0$. It follows that $re^{\delta \circ x} - re^{\delta \circ y} > 0$, hence $\gamma \circ x - \gamma \circ y > 0$, as desired.
\end{proof}

In the course of the proof, we have also proved the following.
\begin{prop}\label{leading-gap-J}
  For all $\delta, \eta \in \Jb^{>0}$ with $\delta > \eta$ and all $x, y \in \Ub$ with $x > y > \Rb$, we have $e^{\delta \circ x} - e^{\delta \circ y} \succ e^{\eta \circ x} - e^{\eta \circ y}$.
\end{prop}

\section{Proof of monotonicity}

\begin{lem}\label{weak-mvp}
  For all $\gamma \in \Jb$ and all $x, y \in \Ub$ with $x > y > \Rb$, we have:
  \begin{enumerate}
  \item\label{weak-mvp-small} if $\gamma \circ x - \gamma \circ y \preceq 1$, then $\frac{\gamma \circ x - \gamma \circ y}{x - y} \preceq \gamma' \circ y$,
  \item\label{weak-mvp-large} if $\gamma \circ x - \gamma \circ y \succeq 1$, then $(\gamma' \circ y)(x - y) \succeq 1$.
  \end{enumerate}
\end{lem}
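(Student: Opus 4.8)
The plan is to prove both parts by induction on $\ER(\gamma)$, following the same structural pattern as \Cref{monotonicity-J}: reduce a general $\gamma \in \Jb^{>0}$ to its leading term $re^\delta$ (with $\ER(\delta) < \ER(\gamma)$), handle the base case $\gamma = \log^{\circ n}(T)$ by hand, and in the inductive step relate the increment $\gamma \circ x - \gamma \circ y$ to $\delta \circ x - \delta \circ y$ via the claim already established in \Cref{monotonicity-J}, namely $\gamma \circ x - \gamma \circ y \sim r(e^{\delta \circ x} - e^{\delta \circ y})$. The derivative side is controlled by the chain rule: $\gamma' = \sum_i r_i e^{\gamma_i}\gamma_i'$, so the leading behaviour of $\gamma' \circ y$ is governed by $(re^{\delta}\delta') \circ y = re^{\delta \circ y}(\delta' \circ y)$.

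First I would settle the base case. For $\gamma = \log^{\circ n}(T)$ the associated function is strictly increasing and \emph{concave}, so its increment is bounded by the derivative at the left endpoint: concavity gives exactly $\frac{\gamma \circ x - \gamma \circ y}{x-y} \leq \gamma' \circ y$, which yields both \eqref{weak-mvp-small} and \eqref{weak-mvp-large} directly (the latter because if the increment is $\succeq 1$ then so is $(\gamma' \circ y)(x-y) \geq \gamma \circ x - \gamma \circ y$). Concavity here is the formal statement that each $\log^{\circ n}$ has positive first and negative second derivative, transported through right composition, which preserves all ordered-field relations.

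For the inductive step I would write $\gamma \circ x - \gamma \circ y \sim re^{\delta \circ y}(e^{\delta \circ x - \delta \circ y} - 1)$ and compare this against $\gamma' \circ y \sim re^{\delta \circ y}(\delta' \circ y)$, so that after cancelling the common factor $re^{\delta \circ y}$ the whole statement reduces to a claim purely about $\delta$: I must show $\frac{e^{\delta \circ x - \delta \circ y} - 1}{x - y} \preceq \delta' \circ y$ in the relevant regime. Setting $a = \delta \circ x - \delta \circ y > 0$, the elementary inequality $e^a - 1 \preceq a$ when $a \preceq 1$ (from the Taylor expansion of $\exp$) lets me replace $e^a - 1$ by $a$, at which point part \eqref{weak-mvp-small} for $\gamma$ follows from part \eqref{weak-mvp-small} applied to $\delta$, since $\frac{a}{x-y} = \frac{\delta\circ x - \delta \circ y}{x-y} \preceq \delta' \circ y$. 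For part \eqref{weak-mvp-large} the logic runs contrapositively: if the $\gamma$-increment is large, I use $e^a - 1 \succeq a$ (valid for $a \geq 0$) to transfer largeness to the $\delta$-increment and invoke the inductive \eqref{weak-mvp-large}, then multiply back up by $\delta' \circ y \cdot$(the exponential factor). I would also need to verify that lower-order monomials $e^\eta$ in the support of $\gamma$, and the corresponding terms $e^\eta \eta'$ in $\gamma'$, are genuinely negligible, which is exactly the content of \Cref{leading-gap-J} and the asymptotic estimate already proved in \Cref{monotonicity-J}.

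The main obstacle I anticipate is the bookkeeping at the exponential level: the two hypotheses ``$\gamma \circ x - \gamma\circ y \preceq 1$'' and ``$\succeq 1$'' must be converted into the analogous hypotheses on $\delta$ before the inductive call applies, and these conversions are not symmetric because $e^a - 1 \preceq a$ holds only for $a \preceq 1$ whereas $e^a - 1 \succeq a$ holds for all $a \geq 0$. In particular, when $\gamma \circ x - \gamma \circ y \succeq 1$ it need not follow that $\delta \circ x - \delta \circ y \succeq 1$ — the exponential can blow up a small $\delta$-increment — so for part \eqref{weak-mvp-large} I expect to split into the subcase where $\delta \circ x - \delta \circ y \succeq 1$ (apply induction to $\delta$) and the subcase where $\delta \circ x - \delta \circ y \prec 1$ (here $e^a - 1 \sim a$, and I instead fall back on part \eqref{weak-mvp-small} for $\delta$, combined with $e^{\delta\circ y} \succ \Rb$, to recover $(\gamma'\circ y)(x-y) \succeq 1$). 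Getting these regimes to match up cleanly, and ensuring the constants implicit in $\preceq$ do not accumulate across the induction, is the delicate part.
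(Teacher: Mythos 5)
Your proposal is correct and follows essentially the same route as the paper: induction on $\ER(\gamma)$, the base case via concavity of $\log^{\circ n}$, and the inductive step by reducing to the leading term $re^\delta$ through \Cref{leading-gap-J} and splitting on whether $\delta \circ x - \delta \circ y$ is $\prec 1$ or $\succeq 1$ (your observation that part~\eqref{weak-mvp-large} for $\gamma$ may need part~\eqref{weak-mvp-small} for $\delta$ when the exponential inflates a small increment is exactly how the paper handles it). The only cosmetic difference is that the paper organises the case split once, noting that in the small-increment regime both conclusions follow simultaneously.
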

\begin{proof}
  Fix $x$, $y$ as in the assumptions. Let $\gamma \in \Jb$. We shall prove the conclusion by induction on $\ER(\gamma)$.

  When $\gamma$ is $\log^{\circ n}(T)$ for some $n$, both conclusions hold because $\log^{\circ n}$ is concave: $0 < \frac{\log^{\circ n}(a) - \log^{\circ n}(b)}{a - b} \leq (\log^{\circ n})'(b)$ for any $b$ in its domain and $a > b$, and so also for $x > y > \Rb$. The conclusion is trivial for $\gamma = 0$.

  Now assume $\ER(\gamma) > 0$ and let $re^\delta$ be the leading term of $\gamma$. Recall that $\delta \in \Jb^{>0}$ and that $\ER(\delta) < \ER(\gamma)$. By \Cref{leading-gap-J}, we have
  \[ \gamma \circ x - \gamma \circ y \sim r(e^{\delta \circ x} - e^{\delta \circ y}) = re^{\delta \circ y}(e^{\delta \circ x - \delta \circ y} - 1), \]
  which we rewrite as
  \[ \frac{\gamma \circ x - \gamma \circ y}{x - y} \sim re^{\delta \circ y}\frac{e^{\delta \circ x - \delta \circ y} - 1}{x - y}. \]
  Recall moreover that since $\gamma \sim re^\delta$ and $\gamma \not\asymp 1$, we have $\gamma' \sim re^\delta \delta'$, so in particular $\gamma' \circ y \sim re^{\delta \circ y} (\delta' \circ y)$.

  We distinguish two cases. If $\delta \circ x - \delta \circ y \prec 1$, then we can use the Taylor expansion of $\exp$ to check that
  \[ \frac{e^{\delta \circ x - \delta \circ y} - 1}{x - y} \sim \frac{\delta \circ x - \delta \circ y}{x - y} \preceq \delta' \circ y, \]
  where the second inequality follows from the inductive hypothesis~\eqref{weak-mvp-small}. Therefore,
  \[ \frac{\gamma \circ y - \gamma \circ y}{x - y} \sim re^{\delta \circ y}\frac{e^{\delta \circ x - \delta \circ y} - 1}{x - y} \preceq re^{\delta \circ y}(\delta' \circ y) \sim \gamma' \circ y. \]
  Both conclusions follow trivially.

  Now suppose that $\delta \circ x - \delta \circ y \succeq 1$. Note in particular that $\gamma \circ x - \gamma \circ y \succ 1$, because $e^{\delta \circ y} \succ 1$ and $e^{\delta \circ x - \delta \circ y} - 1 \succeq 1$, so we only need to prove conclusion~\eqref{weak-mvp-large}. By inductive hypothesis~\eqref{weak-mvp-large}, $(\delta' \circ y)(x - y) \succeq 1$, which with $e^{\delta \circ y} \succ 1$ yields~\eqref{weak-mvp-large}:
  \[ (\gamma' \circ y)(x - y) \sim re^{\delta \circ y}(\delta' \circ y)(x - y) \succ 1. \qedhere \]
\end{proof}

\begin{cor}\label{small-gaps}
  For all $\gamma \in \Jb^{<0}$ and all $x, y \in \Ub$ with $x > y > \Rb$, we have
  \[ \frac{e^{\gamma \circ x} - e^{\gamma \circ y}}{x - y} \preceq e^{\gamma \circ y}(\gamma' \circ y) = (e^\gamma)' \circ y. \]
\end{cor}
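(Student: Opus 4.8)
The plan is to reduce the claim to the weak mean value property of \Cref{weak-mvp} by factoring out the leading behaviour of $e^{-\gamma}$. First I would record the basic inequalities: since $\gamma \in \Jb^{>0}$ we have $\gamma > \Rb$, so $\gamma \circ y > \Rb$, and by \Cref{monotonicity-J} also $\gamma \circ x > \gamma \circ y$. Writing $\Delta \coloneqq \gamma \circ x - \gamma \circ y$, this gives $\Delta > 0$, hence $0 < 1 - e^{-\Delta} < 1$. I would also note $\gamma' \circ y > 0$, and that the rightmost identity holds up to sign, namely $(e^{-\gamma})' \circ y = -e^{-\gamma \circ y}(\gamma' \circ y)$, by the chain rule together with compatibility of composition with $\exp$; the sign is irrelevant since $\preceq$ depends only on absolute values.

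Next I would factor
\[ \frac{e^{-\gamma \circ x} - e^{-\gamma \circ y}}{x - y} = -\,e^{-\gamma \circ y}\,\frac{1 - e^{-\Delta}}{x - y}, \]
where all of $e^{-\gamma \circ y}$, $1 - e^{-\Delta}$, and $x - y$ are positive. As multiplying a $\preceq$-inequality by the positive quantity $e^{-\gamma \circ y}$ preserves it, the whole statement reduces to
\[ \frac{1 - e^{-\Delta}}{x - y} \preceq \gamma' \circ y. \]

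To prove this I would split into the two regimes matching \Cref{weak-mvp}, which are exhaustive since $\preceq$ is total. If $\Delta \succeq 1$, then $1 - e^{-\Delta} \preceq 1$, while \Cref{weak-mvp}\eqref{weak-mvp-large} gives $(\gamma' \circ y)(x - y) \succeq 1$, that is $\tfrac{1}{x-y} \preceq \gamma' \circ y$, and the bound follows. If instead $\Delta \preceq 1$, I would first check $1 - e^{-\Delta} \preceq \Delta$: when $\Delta \prec 1$ the Taylor expansion of $\exp$ gives $1 - e^{-\Delta} \sim \Delta$, whereas when $\Delta \asymp 1$ one simply has $1 - e^{-\Delta} \preceq 1 \asymp \Delta$. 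Then \Cref{weak-mvp}\eqref{weak-mvp-small} yields $\tfrac{\Delta}{x-y} \preceq \gamma' \circ y$, so combining, $\tfrac{1 - e^{-\Delta}}{x-y} \preceq \tfrac{\Delta}{x-y} \preceq \gamma' \circ y$.

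I do not expect a serious obstacle here: essentially all the content is carried by \Cref{weak-mvp}, and the only delicate point is recognising, after dividing by $e^{-\gamma \circ y}$, that the statement is literally a weak mean value bound for $\gamma$ with the numerator $\Delta$ replaced by the bounded-and-comparable quantity $1 - e^{-\Delta}$. The mildest care needed is the elementary estimate $1 - e^{-\Delta} \preceq \min\{1, \Delta\}$ and the observation that its two regimes align exactly with the two cases $\Delta \succeq 1$ and $\Delta \preceq 1$ of the lemma.
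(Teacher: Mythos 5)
Your proof is correct and follows essentially the same route as the paper: factor out $e^{-\gamma\circ y}$, then split on whether $\Delta = \gamma\circ x - \gamma\circ y$ is small or large and apply the corresponding part of \Cref{weak-mvp}. The only cosmetic difference is that you split at $\Delta \preceq 1$ versus $\Delta \succeq 1$ (handling $\Delta \asymp 1$ via part~\eqref{weak-mvp-small}) while the paper splits at $\Delta \prec 1$ versus $\Delta \succeq 1$; both work.
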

\begin{proof}
  Write
  \[ e^{\gamma \circ x} - e^{\gamma \circ y} = e^{\gamma \circ y}(e^{\gamma \circ x - \gamma \circ y} - 1). \]
  If $\gamma \circ x - \gamma \circ y \prec 1$, then using \Cref*{weak-mvp}\eqref{weak-mvp-small}
  \[ e^{\gamma \circ y}(e^{\gamma \circ x - \gamma \circ y} - 1) \sim e^{\gamma \circ y}(\gamma \circ x - \gamma \circ y) \preceq e^{\gamma \circ y}(\gamma' \circ y)(x - y). \]
  Otherwise, $e^{\gamma \circ x - \gamma \circ y} \not\sim 1$, and since $\gamma \circ x < \gamma \circ y$ by \Cref{monotonicity-J} applied to $-\gamma$, we have $e^{\gamma \circ x - \gamma \circ y} \preceq 1$, thus
  \[ e^{\gamma \circ y}(e^{\gamma \circ x - \gamma \circ y} - 1) \asymp e^{\gamma \circ y} \preceq e^{\gamma \circ y}(\gamma' \circ y)(x - y), \]
  where the last inequality follows from \Cref*{weak-mvp}\eqref{weak-mvp-large}.
\end{proof}

\begin{proof}[Proof of \Cref{monotonicity}]
  Let $x, y \in \Ub$ with $x > y > \Rb$ and $f \in \OM$. Trivially, when $f' = 0$, we have $f = r \in \Rb$, so $r \circ x = r \circ y = r$, as desired. We now assume that $f' > 0$, and we want to prove $f \circ x > f \circ y$. The case $f' < 0$ will follow trivially by replacing $f$ with $-f$.

  \textbf{Case $f = T + \varepsilon$ with $\varepsilon \prec T$.} Note that in this case $f' \sim 1$, so $f' > 0$. We claim that $\varepsilon \circ x - \varepsilon \circ y \prec x - y$, and so $f \circ x - f \circ y \sim x - y$, which clearly implies $f \circ x > f \circ y$.

  Suppose that $e^\gamma$ is a monomial in the support of $\varepsilon$. We claim that $e^{\gamma \circ x} - e^{\gamma \circ y} \prec x - y$. This is trivial for $\gamma = 0$, and an immediate consequence of \Cref{leading-gap-J} for $\gamma > 0$. For $\gamma < 0$, we have $1 \succ (e^{\gamma})'$, so $1 \succ (e^{\gamma})' \circ y$, hence by \Cref{small-gaps}
  \[ e^{\gamma \circ x} - e^{\gamma \circ y} \preceq e^{\gamma \circ y}(\gamma' \circ y)(x - y) = ((e^\gamma)' \circ y)(x - y) \prec x - y. \]
  Taking the sum over all terms in $\varepsilon$, we find that $\varepsilon \circ x - \varepsilon \circ y \prec x - y$.

  \textbf{Case $f > \Rb$.} Recall that $\OM$ is confluent, because for any $g \in \OM^{>\Rb}$, the leading monomials along the sequence $(\log^{\circ n}(g) : n \in \Nb)$ must have decreasing exponential rank, until one reaches $\log^{\circ k}(T)$ for some $k$. Pick $n \in \Nb$ such that $\log^{\circ n}(T) \circ f = \log^{\circ n}(f) = \log^{\circ k}(T) + \varepsilon$ for some $k \in \Nb$ and some $\varepsilon \prec \log^{\circ k}(T)$. Note that
  \[ \log^{\circ n}(T) \circ f \circ \exp^{\circ k}(T) = T + \overline{\varepsilon} \]
  where $\overline{\varepsilon} = \varepsilon \circ \exp^{\circ k}(T) \prec \log^{\circ k}(T) \circ \exp^{\circ k}(T) = T$. By the previous case, the function $x \mapsto x + (\overline{\varepsilon} \circ x)$ is strictly increasing. Since the functions $\log$ and $\exp$ are also strictly increasing on $\Ub^{>\Rb}$, then so is the function $x \mapsto f \circ x$.

  \textbf{Case $f \not > \Rb$}. Since by assumption $f' > 0$, we cannot have $f < \Rb$, so there must be some $r \in \Rb$ such that $f - r \prec 1$, and moreover $f - r < 0$. It follows that $g = -\frac{1}{f - r} > \Rb$. By the previous case, the function $x \mapsto g \circ x$ is strictly increasing, thus so is the function
  \[ x \mapsto -\frac{1}{g \circ x} + r = f \circ x. \qedhere \]
\end{proof}

\begin{cor}\label{shrinking-gap}
  For all $f, g \in \OM$ with $1 \not\asymp f \succ g$ and all $x,y \in \Ub$ with $x > y > \Rb$, we have
  \[ f \circ x - f \circ y \succ g \circ x - g \circ y. \]
\end{cor}
\begin{proof}
  Up to replacing $f$ with $-f$, we may assume that $f' > 0$. Since $f \not\asymp 1$, we have $f' \succ g'$, thus $f' - rg' = (f - rg)' > 0$ for every $r \in \Rb$. By \Cref{monotonicity},
  \[ (f - rg) \circ x > (f - rg) \circ y, \quad \text{hence} \quad f \circ x - f \circ y > r(g \circ x - g \circ y). \]
  Moreover, $f \circ x - f \circ y > 0$, and the conclusion follows.
\end{proof}

\begin{cor}\label{leading-gap}
  For all $f, g \in \OM$ with $1 \not\asymp f \sim g$ and all $x,y \in \Ub$ with $x > y > \Rb$, we have
  \[ f \circ x - f \circ y \sim g \circ x - g \circ y. \]
\end{cor}
\begin{proof}
  Apply \Cref{shrinking-gap} to $f$ and $f - g \prec f$.
\end{proof}

\begin{proof}[Proof of \Cref{ivp}]
  Fix $f \in \OM$, $x, y \in \Ub^{>\Rb}$, $w \in \Ub$ such that $f \circ x \leq w \leq f \circ y$. Note that the case $f \in \Rb$ is trivial, so assume $f \notin \Rb$. There is $z \in \Ub^{>\Rb}$ such that $f \circ z = w$: when $f > \Rb$, just take $z = f^{\mathrm{inv}} \circ w$, where $f^{\mathrm{inv}} \in \OM^{>\Rb}$ is the compositional inverse of $f$ (\cite{Bag2025}); otherwise, replace $f$ with $\pm\frac{1}{f - r} > \Rb$ just as in the proof of \Cref{monotonicity}, and reduce to $f > \Rb$. By \Cref{monotonicity}, $z$ has to be between $x$ and $y$.
\end{proof}

\section{Taylor's approximation}

In what follows, fix some $x, \delta \in \Ub$ with $x > \Rb$ and $x + \delta > \Rb$. We shall abbreviate $\frac{f'}{f}$ with $f^\dagger$, provided $f \neq 0$. Note that $f^\dagger = (\log|f|)'$.

\begin{prop}\label{taylor-first-term}
  For all non-zero $f \in \OM$, if $f \not\asymp 1$, $x + \delta \asymp x$, and $(f^\dagger \circ x)\delta \preceq 1$, then
  \[ f \circ (x + \delta) \asymp f \circ x \quad \text{and} \quad f \circ (x + \delta) - f \circ x \asymp (f' \circ x)\delta. \]
  In particular, if $(f^\dagger \circ x)\delta \prec 1$, then $f \circ (x + \delta) \sim f \circ x$.
\end{prop}
\begin{proof}
  Note that $x + \delta \asymp x$ implies $\delta \preceq x$. We work by induction on $\ER(f)$. For the base case, suppose that $f = \log^{\circ n}(T)$ for some $n \in \Nb$. Both conclusions are trivial for $n = 0$. For $n = 1$, we know that
  \[ \log(x + \delta) - \log(x) = \log\left(1 + \frac{\delta}{x}\right) \asymp \frac{\delta}{x} = (f' \circ x)\delta. \]
  Note that $\frac{\delta}{x} \preceq 1 \prec \log(x)$, so in particular $\log(x + \delta) \sim \log(x)$. For $n > 1$, recall that the Taylor series of $\log$ implies that $\log(y + \varepsilon) - \log(y) \sim \frac{\varepsilon}{y}$ when $\varepsilon \prec y$. Using $y = \log^{\circ n-1}(x)$, $\varepsilon = \log^{\circ n-1}(x + \delta) - y$, we can verify inductively that $\varepsilon \prec y$, or in other words $\log^{\circ n-1}(x + \delta) \sim \log^{\circ n-1}(x)$, and
  \begin{multline*}
    \log(\log^{\circ n-1}(x + \delta)) - \log(\log^{\circ n-1}(x)) \sim \frac{\log^{\circ n-1}(x + \delta) - \log^{\circ n-1}(x)}{\log^{\circ n-1}(x)} \\
    \sim \frac{\log(x + \delta) - \log(x)}{\log(x) \cdots \log^{\circ n-1}(x)} \asymp \frac{\delta}{x\log(x) \cdots \log^{\circ n-1}(x)} = (f' \circ x)\delta.
  \end{multline*}

  For $\ER(f) > 0$, we first prove the conclusions for $f = e^\gamma$ with $\gamma \in \Jb^{\neq 0}$. Suppose that $(f^\dagger \circ x)\delta \preceq 1$; since $f^\dagger = \frac{e^\gamma \gamma'}{e^\gamma} = \gamma'$, we have $(\gamma' \circ x)\delta \preceq 1$, and in particular $(\gamma^\dagger \circ x)\delta \prec 1$, as $\gamma \succ 1$. By inductive assumption, $\gamma \circ (x + \delta) - \gamma \circ x \asymp (\gamma' \circ x)\delta \preceq 1$. Since $e^y - 1 \asymp y$ for $y \preceq 1$, we get
  \[ e^\gamma \circ (x + \delta) - e^\gamma \circ x = e^{\gamma \circ x}\left(e^{\gamma \circ (x + \delta) - \gamma \circ x} - 1\right) \asymp e^{\gamma \circ x}(\gamma' \circ x)\delta = ((e^\gamma)' \circ x)\delta. \]
  Since $e^y \asymp 1$ for $y \preceq 1$, we similarly find that
  \[ e^\gamma \circ (x + \delta) = e^{\gamma \circ x}e^{\gamma \circ (x + \delta) - \gamma \circ x} \asymp e^{\gamma \circ x} = e^\gamma \circ x. \]

  For general $f$, let $re^\gamma$ be the leading term of $f$. Note that by assumption $\gamma \neq 0$. Since $f \sim re^{\gamma}$, we have
  \[ f \circ (x + \delta) \sim re^\gamma \circ (x + \delta) \asymp re^\gamma \circ x \sim f \circ x. \]
  Since $f \not\asymp 1$, we also have $f' \sim (re^\gamma)'$ and $f^\dagger \sim {(re^\gamma)}^\dagger = \gamma'$, hence by \Cref{leading-gap},
  \[ f \circ (x + \delta) - f \circ x \sim re^{\gamma} \circ (x + \delta)  - re^{\gamma} \circ x \asymp r((e^\gamma)' \circ x)\delta \sim (f' \circ x)\delta. \]

  When $(f^\dagger \circ x)\delta \prec 1$, or in other words $(f' \circ x)\delta \prec f \circ x$, we find that indeed $f \circ (x + \delta) \sim f \circ x$.
\end{proof}

To prove Taylor's theorem, we now want to iterate the above approximation. To do that, we need to control the assumption $(f^\dagger \circ x)\delta \preceq 1$ when we replace $f$ with its derivatives.

\begin{lem}\label{iter-der-dagger}
  Let $f \in \OM$ be non-zero such that $f \not\asymp T^k$ for all $k \in \Nb$.
  \begin{enumerate}
    \item\label{iter-der-dagger-large} If $f^\dagger \succ \frac{1}{T}$, then ${(f^{(n)})}^\dagger \sim f^\dagger$ for all $n$; in particular, $f^{(n)} \sim {(f^\dagger)}^n f$.
    \item\label{iter-der-dagger-small} Otherwise, for some $\ell \in \Nb$, ${(f^{(n)})}^\dagger \asymp \frac{1}{T}$ for all $n \neq \ell - 1$, and if $\ell > 0$, then ${(f^{(\ell - 1)})}^\dagger \prec \frac{1}{T}$; in particular, $f^{(n)} \preceq \frac{f}{T^n}$, $f^{(\ell+n)} \asymp \frac{f^{(\ell)}}{T^n}$ for all $n$.
  \end{enumerate}
\end{lem}
\begin{proof}
  \eqref{iter-der-dagger-large} We have $\frac{1}{f^\dagger} = \frac{f}{f'} \prec T$, hence
  \[ T' = 1 \succ \left(\frac{f}{f'}\right)' = 1 - \frac{f f''}{{(f')}^2} = 1 - \frac{{(f')}^\dagger}{f^\dagger}. \]
  This says that ${(f')}^\dagger \sim f^\dagger$, and in particular also that ${(f')}^\dagger \succ \frac{1}{T}$. By induction, ${(f^{(n)})}^\dagger \sim f^\dagger$ for all $n \in \Nb$. Since $f^{(n+1)} = {(f^{(n)})}^\dagger f^{(n)} \sim f^\dagger f^{(n)}$, we also find $f^{(n)} \sim {(f^\dagger)}^n f$.

  \eqref{iter-der-dagger-small} Let $r$ be the unique real number such that $f^\dagger - \frac{r}{T} \prec \frac{1}{T}$. Then $f'T - rf \prec f$. Since $f \not\asymp 1$, we find
  \[ f''T + f' - rf' \prec f', \quad \text{or in other words} \quad {(f')}^\dagger - \frac{r - 1}{T} \prec \frac{1}{T}. \]
  In particular, ${(f')}^\dagger \preceq \frac{1}{T}$. Since $f \not\asymp T^k$ for all $k$, we have $f^{(k)} \not\asymp 1$ for all $k$, so we deduce by induction that
  \[ {\left(f^{(n)}\right)}^\dagger - \frac{r - n}{T} \prec \frac{1}{T}. \]
  In turn, ${(f^{(n)})}^\dagger \asymp \frac{1}{T}$ for $n \neq r$, and ${(f^{(n)})}^\dagger \prec \frac{1}{T}$ if $n = r$, which can only happen if $r \in \Nb$. Let $\ell = r + 1$ if $r \in \Nb$ and $\ell = 0$ otherwise to recover the desired conclusion. For $n \in \Nb$ we have $f^{(\ell+n+1)} = {(f^{(\ell+n)})}^\dagger f^{(\ell + n)} \asymp \frac{f^{(\ell + n)}}{T}$, so by induction $f^{(\ell+n)} \asymp \frac{f^{(\ell)}}{T^n}$. Similarly, $f^{(n+1)} = {(f^{(n)})}^\dagger f^{(n)} \preceq \frac{f^{(n)}}{T}$, hence $f^{(n)} \preceq \frac{f}{T^n}$.
\end{proof}

Two illustrative examples are the following. Take $f = e^{e^T}$. In this case, $f^\dagger = e^T \succ \frac{1}{T}$, and \Cref{iter-der-dagger} predicts that $f^{(n)} \sim {(f^\dagger)}^n f$. Indeed,
\[ f' = e^T e^{e^T} = f^\dagger f, \quad f'' = e^T e^{e^T} + e^{2T}e^{e^T} \sim {\left(f^\dagger\right)}^2 f, \quad f''' \sim e^{3T}e^{e^T} = {\left(f^\dagger\right)}^3 f \quad \dotsc \]
Now take $f = T\log(T)$. Here $f^\dagger = \frac{\log(T) + 1}{T\log(T)} \sim \frac{1}{T}$, so we expect to see $f^{(n+1)} \asymp \frac{f^{(n)}}{T}$, with at most one exceptional $f^{(n+1)} \prec \frac{f^{(n)}}{T}$. Indeed,
\[ f' = \log(T) + 1 \sim \frac{f}{T}, \quad f'' = \frac{1}{T} \prec \frac{f'}{T}, \quad f''' = - \frac{1}{T^2} \asymp \frac{f''}{T}, \quad f'''' = \frac{2}{T^3} \asymp \frac{f'''}{T} \quad \dotsc \]

\begin{cor}\label{radius-of-convergence}
  Let $f \in \OM$ be non-zero such that $f \not\asymp T^k$ for all $k \in \Nb$. Suppose that $\delta \neq 0$. Then the sequence $(f^{(n)} \circ x)\delta^{n}$ is strictly $\prec$-decreasing when $\delta \prec x$, and $(f^\dagger \circ x)\delta \prec 1$, weakly $\prec$-increasing when $\delta \preceq x$ and $(f^\dagger \circ x)\delta \preceq 1$, and eventually strictly  $\prec$-increasing otherwise.
\end{cor}
\begin{proof}
  We apply \Cref{iter-der-dagger} to the ratio between two successive elements of the sequence:
  \[ \frac{(f^{(n+1)} \circ x)\delta^{n+1}}{(f^{(n)} \circ x)\delta^{n}} = \left(\left(\frac{f^{(n+1)}}{f^{(n)}}\right)\circ x\right) \delta =  \left({\left(f^{(n)}\right)}^\dagger \circ x\right)\delta. \]

  When $f^\dagger \succ \frac{1}{T}$, then ${(f^{(n)})}^\dagger \sim f^\dagger$ (\Cref*{iter-der-dagger}\eqref{iter-der-dagger-large}), so
  \[ \left({\left(f^{(n)}\right)}^\dagger \circ x\right)\delta \sim \left(f^\dagger \circ x\right)\delta. \]
  The sequence is then strictly $\prec$-decreasing when $(f^\dagger \circ x)\delta \prec 1$, which implies $\delta \prec x$ since $f^\dagger \circ x \succ \frac{1}{x}$, weakly $\prec$-increasing when $(f^\dagger \circ x)\delta \preceq 1$, which also implies $\delta \prec x$, and it is strictly $\prec$-increasing otherwise.

  When $f^\dagger \preceq \frac{1}{T}$, then ${(f^{(n)})}^\dagger \preceq \frac{1}{T}$, with ${(f^{(n)})}^\dagger \asymp \frac{1}{T}$ for all but possibly one value of $n$ (\Cref*{iter-der-dagger}\eqref{iter-der-dagger-small}), hence
  \[ \left({\left(f^{(n)}\right)}^\dagger \circ x\right)\delta \preceq \frac{\delta}{x}, \]
  with equivalence for all but possibly one $n$. The sequence is then strictly $\prec$-decreasing when $\delta \prec x$, which implies $(f^\dagger \circ x)\delta \prec 1$ since $f^\dagger \circ x \preceq \frac{1}{x}$, weakly $\prec$-decreasing when $\delta \preceq x$, which implies $(f^\dagger \circ x)\delta \preceq 1$, and it is eventually strictly $\prec$-increasing otherwise.
\end{proof}

\begin{cor}\label{iter-taylor-first-term}
  Let $f \in \OM$ be non-zero such that $f \not\asymp T^k$ for all $k \in \Nb$. If $x + \delta \asymp x$ and $(f^\dagger \circ x)\delta \preceq 1$, then $f^{(n)} \circ (x + \delta) \asymp f^{(n)} \circ x$ for all $n \in \Nb$.
\end{cor}
\begin{proof}
  Note that $\delta \preceq x$. By \Cref{radius-of-convergence}, for every $n \in \Nb$ we have
  \[ {\left({\left(f^{(n)}\right)}^\dagger \circ x\right)}\delta = \frac{\left(f^{(n+1)} \circ x\right)\delta^{n+1}}{\left(f^{(n)} \circ x\right)\delta^n} \preceq 1, \]
  thus the conclusion follows from \Cref*{taylor-first-term} applied to each $f^{(n)}$.
\end{proof}

\begin{proof}[Proof of \Cref{taylor}]
  When $\ER(f) = 0$, $f = \log^{\circ k}(T)$, and the conclusion follows directly from the Taylor expansion of $\log$. For $\ER(f) > 0$, fix some $x$, $\delta$ as in the assumptions. Note that $\delta \preceq x$. Write $f = \sum_\gamma r_\gamma e^\gamma$ for $\gamma$ ranging in $\Jb$, where by construction $r_\gamma \neq 0$ implies $\ER(\gamma) < \ER(f)$. Split $f$ as follows:
  \[ f_0 = \sum_{(\gamma' \circ x)\delta \preceq 1} r_\gamma e^\gamma, \quad f_1 = f - f_0 = \sum_{(\gamma' \circ x)\delta \succ 1} r_\gamma e^\gamma. \]

  We first show that $f_1$ can be ignored. Suppose that $f_1 \neq 0$. On the one hand, by construction $(f_1^\dagger \circ x)\delta \sim (\gamma' \circ x)\delta \succ 1$, where $e^\gamma$ is the leading monomial of $f_1$, hence $f_1^\dagger \succ f^\dagger$, thus necessarily $f_1 \prec f$ and $f_1 \prec 1$, whence $f_1 \not\asymp T^k$ for all $k \in \Nb$; since $f \not\asymp T^k$ for all $k$, $f^{(n)} \succ f_1^{(n)}$. On the other, $(f_1^\dagger \circ x) \succ \frac{1}{\delta} \succeq \frac{1}{x}$, so we cannot have $f_1^\dagger \preceq \frac{1}{T}$, hence $f_1^\dagger \succ \frac{1}{T}$, and so $f_1^{(n)} \sim {(f_1^\dagger)}^n f_1$ by \Cref*{iter-der-dagger}\eqref{iter-der-dagger-large}. It follows that
  \[ f_1 \circ x \sim \frac{f_1^{(n)} \circ x}{{(f_1^\dagger \circ x)}^n} \prec \frac{f^{(n)} \circ x}{{(f_1^\dagger \circ x)}^n} \prec (f^{(n)} \circ x)\delta^n. \]

  Similarly, consider $f_1 \circ (x + \delta)$. Note that $(f_1^\dagger \circ (x + \delta))\delta \succ 1$: if not, by \Cref{iter-taylor-first-term} applied to $f_1$ at $x + \delta$, with $-\delta$ in place of $\delta$, we would get
  \[ 1 \prec (f_1^\dagger \circ x)\delta = \frac{f_1' \circ x}{f_1 \circ x}\delta \asymp \frac{f_1' \circ (x + \delta)}{f_1 \circ (x + \delta)}\delta = (f_1^\dagger \circ (x + \delta))\delta \preceq 1, \]
  a contradiction. Therefore, just as in the previous argument, we find
  \[ f_1 \circ (x + \delta) \sim \frac{f^{(n)} \circ (x + \delta)}{{(f_1^\dagger \circ (x + \delta))}^n} \prec (f^{(n)} \circ (x + \delta))\delta^n. \]
  Since $f^{(n)} \circ (x + \delta) \asymp f^{(n)} \circ x$ by \Cref{iter-taylor-first-term},
  \[ f_1 \circ (x + \delta) \prec (f^{(n)} \circ x)\delta^n. \]
  Therefore,
  \[ f \circ (x + \delta) - f \circ x = f_0 \circ (x + \delta) - f_0 \circ x + o\left( (f^{(n)} \circ x)\delta^n \right)\]

  Since $f \asymp f_0$, so $f^{(n)} \asymp f_0^{(n)}$, it is now enough to prove the conclusion with $f_0$ in place of $f$. Suppose that $e^\gamma$ is a monomial in the support of $f_0$, thus $\gamma \in \Jb$, $(\gamma' \circ x)\delta \preceq 1$, and $\ER(\gamma) < \ER(f_0) \leq \ER(f)$. We distinguish two cases.

  If $\gamma \asymp T^k$ for some $k \in \Nb$, then $k > 0$, hence $\gamma^\dagger \asymp \frac{1}{T}$. For each monomial $\mf$ in the support of $\gamma$, we have $\gamma \succeq \mf \succ 1$, so in particular $\mf^\dagger \preceq \gamma^\dagger \asymp \frac{1}{T}$, hence $(\mf^\dagger \circ x)\delta \preceq \frac{\delta}{x} \preceq 1$. When $\mf \not\asymp T^d$ for all $d \in \Nb$, since $\ER(\mf) \leq \ER(\gamma) < \ER(f)$, we may apply the inductive hypothesis to deduce
  \[ \mf \circ (x + \delta) = \sum_{i = 0}^{n-1} \frac{\mf^{(i)} \circ x}{i!}\delta^i + O\left(\mf^{(n)} \delta^n\right). \]
  When $\mf \asymp T^d$ for some $d \in \Nb$, then in fact $\mf = T^d$, hence the above approximation still holds by the binomial theorem and the fact that $\delta \preceq x$. Moreover, in either case $\mf^{(i)} \preceq \frac{\mf}{T^{i}} \preceq T^{k - i}$ (by \Cref*{iter-der-dagger}\eqref{iter-der-dagger-small} in the former case, trivially in the latter), so $(\mf^{(i)} \circ x)\delta^i \preceq 1$ for all $i > 0$. By strong linearity of composition and derivation, we can sum all the terms of $\gamma$ to deduce that $(\gamma^{(i)} \circ x)\delta^i \preceq 1$ for all $i > 0$, and that
  \[ \gamma \circ (x + \delta) = \sum_{i = 0}^{n - 1}\frac{\gamma^{(i)} \circ x}{i!}\delta^i + O\left(\gamma^{(n)} \delta^n\right). \]

  If $\gamma \not\asymp T^k$ for all $k \in \Nb$, then we simply observe that $(\gamma^\dagger \circ x)\delta \prec 1$, since $\gamma \succ 1$, so the above equality holds by inductive hypothesis, and $(\gamma^{(i)} \circ x)\delta^i \preceq 1$ for $i > 0$ by \Cref{radius-of-convergence}.

  Therefore,
  \[ \begin{split}
    f_0 \circ (x + \delta) &= \sum_{(\gamma' \circ x)\delta \preceq 1} r_\gamma e^{\gamma \circ (x + \delta)} = \sum_{(\gamma' \circ x)\delta \preceq 1} r_\gamma e^{\sum_{i = 0}^{n - 1} \frac{\gamma^{(i)} \circ x}{i!}\delta^i + O\left((\gamma^{(n)} \circ x)\delta^n\right)} \\
    & = \sum_{(\gamma' \circ x)\delta \preceq 1} r_\gamma e^{\gamma \circ x} \exp\left(\sum_{i = 1}^{n - 1} \frac{\gamma^{(i)} \circ x}{i!}\delta^i + O\left((\gamma^{(n)} \circ x)\delta^n\right)\right) \\
    & = \sum_{(\gamma' \circ x)\delta \preceq 1} \left(\sum_{i = 0}^{n - 1} \frac{{(r_\gamma e^\gamma)}^{(i)} \circ x}{i!}\delta^i + O\left(({(r_\gamma e^\gamma)}^{(n)} \circ x)\delta^n\right)\right) \\
    & = \sum_{i = 0}^{n - 1} \frac{f_0^{(i)} \circ x}{i!}\delta^i + O\left((f_0^{(n)} \circ x)\delta^n\right),
  \end{split} \]
  where on the second line, the argument of $\exp$ is $\preceq 1$, since $(\gamma^{(i)} \circ x)\delta^i \preceq 1$ for $i > 0$, and so we may use the fact that $e^y = 1 + y + \dotsb + \frac{y^{n-1}}{(n-1)!} + O(y^n)$ for any $y \preceq 1$ to proceed to the following step.
\end{proof}

\begin{rem}\label{validity-at-boundary}
  The conclusion of \Cref{taylor} loses its significance at the boundary: if $f^\dagger \succ \frac{1}{T}$ and $(f^\dagger \circ x)\delta \asymp 1$, then the error terms all have the same size by \Cref{iter-der-dagger}, and the conclusion collapses to $f \circ (x + \delta) = O(f \circ x)$; a comparable remark can be made for $f^\dagger \preceq \frac{1}{T}$ and $\delta \succeq x$, where the error terms can get smaller at most once.

  Error terms even increase in size if $(f^\dagger \circ x)\delta \succ 1$ or $\delta \succ x$. In those cases, the conclusion of \Cref{taylor} may or may not be valid depending on $f$. Consider the `first-order approximation'
  \[ f \circ (x + \delta) - f \circ x = O((f' \circ x)\delta). \]

  When $(f^\dagger \circ x)\delta \succ 1$, consider $f = e^T$, thus assume $\delta \succ 1$. Then the first-order approximation is valid for $f = e^T$ if and only if $\delta < 0$. More generally, the approximation remains valid if $f \succ 1$ and $\delta < 0$, or if $f \prec 1$ and $\delta > 0$, since \Cref{monotonicity} implies $f \circ (x + \delta) \preceq f \circ x$.

  For $\delta \succ x$, the first-order approximation is valid for $f = \log(T)$ and it fails for $f = \sqrt{T^3}$. Analogy with real functions suggests that the approximation is valid for $\delta \succ x$ exactly when $f \preceq T$. This is related to whether $f'' \geq 0$ implies that $f$ is convex, namely $f \circ (x + \delta) - f \circ x \geq (f' \circ x)\delta$ for $\delta \geq 0$. As alluded to in the introduction, this does not seem to follow in a direct way from \Cref{monotonicity}.

  The boundary $x + \delta \prec x$ is more subtle: the error terms do not increase in size, but the Taylor approximation may still hold or fail. For example, the first-order approximation fails for $f = \frac{1}{T}$ and $f = \log(T)$ (note that $\delta \asymp x$, so $(f^\dagger \circ x)\delta \preceq 1$ in both examples):
  \[ \frac{1}{x + \delta} - \frac{1}{x} = -\frac{\delta}{(x + \delta)x} \succ - \frac{\delta}{x^2}, \quad \log(x + \delta) - \log(x) = \log\left(\frac{x + \delta}{x}\right) \succ 1 \asymp \frac{\delta}{x}. \]
  On the other hand, the approximation is valid for $f = \sqrt{T}$:
  \[ \sqrt{x + \delta} - \sqrt{x} = \frac{\delta}{\sqrt{x + \delta} + \sqrt{x}} \asymp -\frac{\delta}{2\sqrt{x}}. \]
  We can in fact give a full classification. Assume $x + \delta \prec x$ and $f^\dagger \preceq \frac{1}{T}$ (we ignore $f^\dagger \succ \frac{1}{T}$, as in that case $(f^\dagger \circ x)\delta \preceq 1$ implies $\delta \prec x$). Then the first-order approximation is valid if and only if $f^\dagger \asymp \frac{1}{T}$ and $f \succ 1$.

  Indeed, suppose that $f^\dagger \asymp \frac{1}{T}$. Note that the first-order approximation collapses to $f \circ (x + \delta) = O(f \circ x)$. We have $\log(f) \sim r\log(T)$ for some non-zero $r \in \Rb$, so
  \[ \log(f \circ (x + \delta)) - \log(f \circ x) \sim r(\log(x + \delta) - \log(x)) \succ 1 \]
  by \Cref{leading-gap}. Note that $\log(x + \delta) - \log(x)$ is negative infinite. When $f \prec 1$, we have $r < 0$, so $f \circ (x + \delta) \succ f \circ x$, hence the approximation fails. When $f \succ 1$, then $r > 0$, so $f \circ (x + \delta) \prec f \circ x$, hence the approximation is valid.

  Now suppose that $f^\dagger \prec \frac{1}{T}$. Let $\varepsilon = \frac{f \circ (x + \delta) - f \circ x}{f \circ x}$. We claim that $f$ satisfies the first-order approximation if and only if $\log|f|$ does. Since $(f' \circ x)\delta \asymp (f' \circ x)x \prec f \circ x$, the approximation for $f$ implies $\varepsilon \prec 1$, while the one for $\log|f|$ implies $\log(1 + \varepsilon) \prec 1$. Crucially, in either case $\varepsilon \sim \log(1 + \varepsilon)$. The claim follows at once from $(\log|f|)' = \frac{f'}{f}$. Moreover, ${(\log|f|)}^\dagger = \frac{f^\dagger}{\log|f|} \prec f^\dagger \prec \frac{1}{T}$, since $f \not\asymp 1$. Therefore, to check whether $f$ satisfies the first-order approximation, we may replace $f$ with $\log|f|$ until $f \sim \log^{\circ k}(T)$. Since $f' \sim (\log^{\circ k}(T))'$, \Cref{leading-gap} implies that we may further replace $f$ with $\log^{\circ k}(T)$. Applying the same argument in reverse, we may replace $\log^{\circ k}(T)$ with $\log(T)$. The approximation fails for $\log(T)$, hence it fails for the starting $f$.
\end{rem}

\begin{cor}\label{taylor-general}
  Let $f \in \OM$ and $x, \delta \in \Ub$ with $x > \Rb$, $x + \delta > \Rb$, $x + \delta \asymp x$. If $f \asymp T^k$ for some $k \in \Nb$ and $f^{(k+1)} \neq 0$, suppose that $({(f^{(k+1)})}^\dagger \circ x)\delta \preceq 1$; otherwise, if $f \neq 0$, suppose that $(f^\dagger \circ x)\delta \preceq 1$. Then for all $n \geq 0$,
  \[ f \circ (x + \delta) = \sum_{i = 0}^{n-1} \frac{f^{(i)} \circ x}{i!}\delta^i + O\left((f^{(n)} \circ x)\delta^n\right). \]
\end{cor}
\begin{proof}
  If $f \not\asymp T^k$ for all $k \in \Nb$ and $f \neq 0$, this is just \Cref{taylor}. If $f = 0$, the conclusion is trivial.

  Now suppose that $f \asymp T^k$ for some $k \in \Nb$. Let $p$ be the sum of all the terms of $f$ that are $\asymp T^d$ for some $d \in \Nb$, where necessarily $d \leq k$. Note that $p \asymp f \asymp T^k$. Then $p$ is a polynomial in $T$ of degree $k$, and by construction $g = f - p$ satisfies $g \not\asymp T^d$ for all $d \in \Nb$. In particular, $p^{(k+1)} = 0$, so $f^{(k+1)} = g^{(k+1)}$.

  The conclusion of \Cref{taylor} is valid for $p$ by the binomial theorem and the fact that $\delta \preceq x$ (and for $n > k$, it is true even for $\delta \succ x$, as the error term becomes $0$). If $g = 0$, we are done. If $g \neq 0$, we distinguish two cases. If $g^\dagger \preceq \frac{1}{T}$, then $\delta \preceq x$ implies $(g^\dagger \circ x)\delta \preceq \frac{\delta}{x} \preceq 1$. If $g^{\dagger} \succ \frac{1}{T}$, then ${(f^{(k+1)})}^\dagger = {(g^{(k+1)})}^\dagger \sim g^\dagger$ by \Cref*{iter-der-dagger}\eqref{iter-der-dagger-large}, so the assumptions guarantee that $(g^\dagger \circ x)\delta \preceq 1$. In either case, we can apply \Cref{taylor} to $g$. The conclusion now follows immediately from $f = p + g$ and the observations $f^{(n)} \asymp p^{(n)} \succ g^{(n)}$ for $n \leq k$, $f^{(n)} = g^{(n)}$ for $n > k$.
\end{proof}

\bibliographystyle{alphaurl}
\bibliography{mscrefs}

\end{document}